\newtheorem{theorem}{Theorem}[section]
\newtheorem{lemma}[theorem]{Lemma}
\theoremstyle{definition}
\newtheorem{remark}[theorem]{Remark}
\theoremstyle{definition}
\DeclareMathOperator{\dd}{{\mathrm{d}}}
\newcommand{\ssl}[2]{\mathrm{GL}(#1,\mathbb{#2})}
\newcommand{\ggl}[2]{\mathrm{GL}(#1,\mathbb{#2})}
\DeclareMathOperator{\GL}{GL}
\newcommand{\Z}{ \mathbb{Z}}
\newcommand{\C}{\mathbb{C}}
\newcommand{\bra}[1]{\left(#1\right)}
\newcommand{\sign}{\operatorname{sgn}}
\newcommand{\blue}[1]{\textcolor{blue}{#1}}
\newcommand{\cyan}[1]{\textcolor{cyan}{#1}}
\newcommand{\nothing}[1]{}
\newcommand\srel[2]{\begin{smallmatrix} {#1} \\ {#2} \end{smallmatrix}}
\DeclareMathOperator{\Kl}{\mathrm{Kl}}
\DeclareMathOperator{\gama}{{\mathbf{\mathsf G}}}
\newcommand{\sums}{\sideset{}{^*}\sum}
\renewcommand{\vec}{\mathbf}
\renewcommand{\mod}{\textrm{mod }}
\newcommand{\cc}{\tfrac{Q_1\cdots Q_{M} c}{D_1 \cdots D_{M} }}
\newcommand{\cb}{\tfrac{Q_1\cdots Q_{M} c}{b_1 \cdots b_{M} }}
\newcommand{\sumD}{\sum\limits_{\vec{D}|\vec{Q}}} 
\newcommand{\sumdd}{\sum\limits_{\vec{d}|\vec{q}}} 
\begin{document}

\title{\scshape The balanced Voronoi formulas for $\textrm{GL}(n)$}

\author{Stephen D. Miller\thanks{Supported by NSF grant DMS-1500562}{ }  and Fan Zhou}

\maketitle
\begin{abstract}
In this paper we show how the $\textrm{GL}(N)$ Voronoi summation formula of \cite{millerschmid2} can be rewritten to incorporate hyper-Kloosterman sums of various 
dimensions  on both sides.  This generalizes a formula for $\textrm{GL}(4)$ with ordinary Kloosterman sums on both sides that was used in \cite{BLM} to prove nonvanishing of $GL(4)$ $L$-functions by $GL(2)$-twists, and later by the second-named author in \cite{zhou}.
\\
\\
MSC: 11F30 (Primary), 11F68, 11L05
\\
\\
\end{abstract}
\section{Introduction}\label{sec:intro}

The Voronoi summation formula for GL(2) has long been a standard tool for studying analytic properties of automorphic forms and their $L$-functions.
More recently, the Voronoi formula for GL(3) of the first-named author and Schmid in \cite{millerschmid1} has found applications in the study of automorphic forms on GL(3) and their $L$-functions, such as \cite{miller}, \cite{li}, \cite{munshi}, and \cite{fouvry}.
The Voronoi formula was generalized to GL($N$) in \cite{millerschmid2}, with other proofs later
 found by   \cite{goldfeldli1}, \cite{goldfeldli2}, \cite{ichinotemplier}, and \cite{kiralzhou}.

The existing Voronoi formula  for GL($N$), $N\geq 3$, (e.g., Theorem~\ref{thm:originalvoronoi}) is a Poisson-style summation formula with Fourier coefficients of an automorphic form twisted by additive characters	 on one side, and those of a contragredient form twisted by (hyper-)Kloosterman sums
of dimension $N-2$ on the other side.
The appearance of the (hyper-)Kloosterman sums was already suggested by finite harmonic analysis with Dirichlet characters and Gauss sums, e.g.,  in \cite{dukeiwaniec}.

In 2011, the first-named author and Xiaoqing Li discovered a different (so called ``balanced'')  Voronoi-type formula on GL(4), with both sides twisted by ordinary Kloosterman sums (see \cite{BLM} and \cite[Theorem 1.2]{zhou}).
This formula was first derived by modifying the  automorphic-distributional proof  in \cite{millerschmid2}.
The second-named author later generalized that formula to GL($N$)  under certain hypotheses
(\cite[Theorem 1.1]{zhou}). In this paper, we complete the general balanced Voronoi formulas for cusp forms on $\ggl N Z\backslash \ggl N R$.
These balanced formulas are derived from the original Voronoi formula of \cite{millerschmid2}, and equate  a  sum of Fourier coefficients twisted by hyper-Kloosterman sums of dimension $L$  with a contragredient sum  twisted by hyper-Kloosterman sums of dimension $M$, where  $N=L+M+2$.	
The original formula of Miller and Schmid corresponds to the case of $L=0$ and $M=N-2$, while
the balanced formula of Li and Miller on GL(4) corresponds to the case of $L=1$ and $M=1$. 
The latter formula on GL(4) is a key ingredient in the recent nonvanishing theorem for GL(2)-twists of GL(4) $L$-functions in \cite{BLM}.  This is because the Kloosterman sums in the balanced Voronoi formula on GL(4) mesh well with the Kloosterman sums appearing in the Kuznetsov trace formula on GL(2).  The match between them is used in \cite{BLM} to create a  spectral reciprocity formula, from which mean value estimates and the nonvanishing result are deduced.

The proof of our balanced formulas (Theorem \ref{thm:balancedvoronoi}) in this paper is different from the automorphic-distributional method used to prove Li and Miller's balanced formula on GL(4). Our proof is also different from that of \cite[Theorem 1.1]{zhou}, which instead uses functional equations of twisted automorphic $L$-functions.

\nothing{
Let $$S(a,b;c):=\sum_{\substack{r\;\mod c\\(r,c)=1}}\exp\bra{2\pi i\frac{ar+b\bar r}{c}}$$
be the ordinary Kloosterman sum.
}


Before stating the formulas, we need define the hyper-Kloosterman sums which already appear in the Voronoi formula of \cite{millerschmid2} for GL($N$) for $N\geq 4$ (restated in Theorem~\ref{thm:originalvoronoi} below).
Denote $e(x):=\exp(2\pi i x)$.
Let $a,n \in \Z$,  $c\in \mathbb{N}$, and let
	$$
		\vec{q}=  (q_1,q_2, \ldots, q_{\mathsf{N}}) \qquad \text{ and } \qquad
		\vec{d} = (d_1, d_2, \ldots, d_{\mathsf{N}})
	$$
be $\mathsf{N}$-tuples of positive integers satisfying the divisibility conditions
\begin{equation}\label{eq:divisibilityConditions}
d_1|q_1c\,,\quad d_2\left|\frac{q_1q_2c}{d_1}\,,\qquad \cdots,\qquad
d_{\mathsf{N}}\right|\frac{q_1\cdots   q_{\mathsf{N}}c}{d_1 \cdots d_{\mathsf{N}-1}}\,.
\end{equation}
Define the $\mathsf{N}$-dimensional hyper-Kloosterman sum as
	\begin{equation}\nonumber
		\Kl_{\mathsf{N}}(a,n,c;\vec{q},\vec{d})  \ \  :=    \ \
        \sums_{\srel{\srel{x_1 (\text{mod } \frac{q_1c}{d_1})}{x_2 (\text{mod } \frac{q_1q_2c}{d_1d_2})}}{\srel{\vdots}{x_{\mathsf{N}} (\text{mod } \frac{q_1\cdots q_{\mathsf{N}} c}{d_1 \cdots d_{\mathsf{N}} })}}}
		e\left(\frac{d_1x_1a}{c} + \frac{d_2x_2 \overline{x_1}}{\frac{q_1c}{d_1}}
		+ \cdots +
		\frac{d_{\mathsf{N}}x_{\mathsf{N}} \overline{x_{\mathsf{N}-1}}}{\frac{q_1\cdots q_{\mathsf{N}-1} c}{d_1 \cdots d_{\mathsf{N}-1} }}
		+ \frac{n \overline{x_{\mathsf{N}}}}{\frac{q_1\cdots q_{\mathsf{N}} c}{d_1 \cdots d_{\mathsf{N}} }}\right),
\label{eq:hyper-kloosterman}
	\end{equation}
	where $\sums$ indicates that the summations are restricted to coprime residue classes  and
	$\overline{x_i}$ denotes the multiplicative inverse of $x_i$  modulo  $\frac{q_1\cdots q_i c}{d_1 \cdots d_i}$. In the degenerate case of $\mathsf{N}=0$, we define
$\Kl_0(a,n,c; \;, \;)=e\bra{\frac{an}{c}}$; when $\mathsf{N} = 1$ the hyper-Kloosterman sum $\Kl_1(a,n,c; q_1 ,d_1 )$ reduces to the ordinary Kloosterman sum $S(aq_ 1 ,n;q_ 1c /d_ 1 )$.


Let $F$ be a cuspidal automorphic form for $\ggl N Z$.  As is customary, we assume that $F$ generates an irreducible subrepresentation $\pi$ of $L^2_\xi(Z_{\mathbb{R}}\ggl N Z \backslash \ggl N R)$ under the right regular representation of $\ggl N R$, where $Z_{\mathbb{R}}$ denotes the center of $\ggl N R$ and $\xi$ is a central character.  Note this does not imply that   $F$ is a Hecke eigenform, which is a stronger assumption that is unnecessary using our methods.
Let $A(*,\cdots,*)$ denote its abelian Fourier coefficients (see \cite[(2.9)]{millerschmid2} and \cite[(2.1.5)]{bump}),
which are the Hecke eigenvalues of $F$ when $F$ is a normalized Hecke eigenform.
The Voronoi summation formula in \cite{millerschmid2} is a Poisson-sum style identity relating sums of the abelian Fourier coefficients weighted against   test functions $\omega$ and $\Omega$, which are related by an integral transform completely  determined by $\pi$.  Further background on Voronoi summation    and this integral transform (which our new formula shares as well) is given in Section~\ref{sec:formerappendix}.

There are various ways to describe  allowable choices of test functions $\omega$ in the Voronoi summation formula.  The   simplest approach (which we follow here) is to demand that $\omega$ be a smooth function on $\mathbb R$ which has compact support contained in   $ \mathbb{R}_{>0} =(0,\infty)$; this is natural since $\omega(x)$ is never evaluated at  $x=0$ in  Theorem~\ref{thm:mainthm_testfunction}.
However, for some applications (e.g., to $L$-functions) it is important to allow different behavior at the origin,  such as fractional powers of the form $|x|^s$ or $|x|^s\sign(x)$ for $s\in \C$.  We shall not pursue this here, other than noting that any admissible function used in the usual Voronoi formula on $\GL(N)$ (see \cite[(1.8)]{millerschmid2}) can be used in the balanced Voronoi formulas (with only minor modifications to account for parities); this is because our proof constructs the  balanced formula  as a finite average of formulas of the type given in Theorem~\ref{thm:originalvoronoi}.
 At a formal level, the integral  transform  has the form
\begin{equation}\label{formalvoronoitransform}
\Omega(y) \ \ = \ \ \frac{1}{|y|}\int_{\mathbb{R}^N} \omega\bra{\frac{x_1\cdots x_N}{y}} \prod_{1\leq j\leq N} \bra{e(-x_j)|x_j|^{-\lambda_j}\sign (x_j)^{\delta_j}\dd \!x_j},
\end{equation}
where the $\lambda_j$ and $\delta_j$ are the representation parameters of $\pi$ (this notion as well as a reformulation of (\ref{formalvoronoitransform}) in terms of Mellin inversion is given in  Section~\ref{sec:formerappendix}; see also \cite[\S1]{millerschmid2}).

\begin{theorem}\label{thm:mainthm_testfunction}
Let $F$ be a cuspidal automorphic form on $\ssl{N}{Z}\backslash \ggl N R$  for $N\geq 3$ with abelian Fourier coefficients $A(*,\ldots, *)$, and which generates an irreducible representation of $\ggl N R$.
Let $\omega\in C^\infty_{c}({\mathbb R}_{>0})$ and let $L$ and $M$ be two non-negative integers with $L+M+2=N$.  Let $c>0$ be an integer and let $a$ be any integer with $(a,c)=1$.
Denote by $\overline{a}$ the multiplicative inverse of $a$  modulo $c$.
Let $\vec{q}=(q_1,q_2,\cdots,q_L)$ be an $L$-tuple of positive integers and let
$\vec{Q}=(Q_1,Q_2,\cdots,Q_M)$ be an $M$-tuple of positive integers.
Let $\sumD$ stand for $ \sum_{D_1|Q_1c}\sum_{D_2 | \frac{Q_1Q_2c}{D_1}}
		\cdots \sum_{D_{M}| \frac{Q_1\ldots Q_M c}{D_1 \ldots D_{M-1}}}$
and let $\sumdd$ stand for 		$ \sum_{d_1|q_1c}\sum_{d_2 | \frac{q_1q_2c}{d_1}}
		\cdots \sum_{d_{L}| \frac{q_1\ldots q_L c}{d_1 \ldots d_{L-1}}}$. Then
	\begin{align*}
	\begin{split}
	\sumD &\sum_{n=1}^\infty A(q_{L},\cdots, q_1,D_1,\cdots, D_M ,n)\Kl_M(\bar{a},n, c; \vec{Q}, \vec{D})
D_1^MD_2^{M-1}\cdots D_M\; \omega\!\bra{\frac{nD_1^{M+1}D_2^M\cdots D_M^2}{q_1^Lq_2^{L-1}\cdots q_L}}\\
&= \sumdd 				d_1^Ld_2^{L-1}\cdots d_L
		 \sum_{n=1}^\infty
		\frac{A(n,d_{L}, \ldots,  d_1, Q_1,\cdots, Q_M) \Kl_L(a,n, c; \vec{q}, \vec{d}) }
		{c^{L+1}}
\Omega\!\bra{\frac{(-1)^ { M+1} nd_1^{L+1}d_2^L\cdots d_L^2}{c^NQ_1^MQ_2^{M-1}\cdots Q_M}}		
				\\
		&+  \sumdd				d_1^Ld_2^{L-1}\cdots d_L \sum_{n=1}^\infty
		\frac{A(n,d_{L}, \ldots,  d_1, Q_1,\cdots, Q_M) \Kl_L(a,-n, c; \vec{q}, \vec{d}) }
		{c^{L+1}}
\Omega\!\bra{\frac{(-1)^{M}nd_1^{L+1}d_2^L\cdots d_L^2}{c^NQ_1^MQ_2^{M-1}\cdots Q_M}},		
		\end{split}
	\end{align*}
where $\Omega$ is the integral  transform from (\ref{formalvoronoitransform}) (which is rigorously defined as a convergent integral in (\ref{eq:Omega+-def_new})-(\ref{eq:transform_integration_w_gamma})).
\end{theorem}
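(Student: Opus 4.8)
The plan is to obtain the balanced formula as a finite linear combination of the original Voronoi formulas of Theorem~\ref{thm:originalvoronoi}, exploiting the fact that the hyper-Kloosterman sum $\Kl_M$ on the left-hand side can be produced by averaging the additive characters that appear on the ``additive'' side of the standard GL$(N)$ Voronoi formula. Concretely, I would start from Theorem~\ref{thm:originalvoronoi} applied with modulus $c$ replaced by a suitable composite modulus built from $c$ and the entries of $\vec{Q}$ and the $D_i$, and with the numerator $\bar a$ replaced by a variable residue $x$ coprime to that modulus. Summing over $x$ against the $M$-fold nested character system $e\!\left(\frac{D_1x_1\bar a}{c}+\cdots+\frac{n\overline{x_M}}{Q_1\cdots Q_M c/(D_1\cdots D_M)}\right)$ exactly assembles $\Kl_M(\bar a,n,c;\vec Q,\vec D)$ on one side; the divisor conditions \reff{eq:divisibilityConditions} are precisely what make these nested sums well-defined, which is why the outer sums $\sumD$ appear.

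The key steps, in order, are: (1) fix the combinatorial bookkeeping — identify which instance of Theorem~\ref{thm:originalvoronoi} (i.e.\ which choice of its modulus, its additive-character numerator, and its $\vec q,\vec d$ data of length $N-2=L+M$) corresponds to each tuple $(\vec D, x_1,\dots,x_M)$ in the sum defining $\Kl_M$; (2) on the ``additive'' side of each such instance, recognize the additive twist $e(\text{(integer)}\cdot \tilde n/\tilde c)$ and, after summing over the $x_i$'s, collapse it into $\Kl_M(\bar a,n,c;\vec Q,\vec D)$ times the claimed monomial weight $D_1^M D_2^{M-1}\cdots D_M$ in front of $A(q_L,\dots,q_1,D_1,\dots,D_M,n)\,\omega(\cdots)$; (3) on the ``Kloosterman'' side of each instance, the dimension-$(L+M)$ hyper-Kloosterman sum factors, via the divisor structure and a twisted-multiplicativity/coprime-splitting argument, into a dimension-$L$ piece $\Kl_L(a,\pm n,c;\vec q,\vec d)$ times another dimension-$M$ piece; the latter, after the same sum over $x_i$, telescopes and leaves only the $c^{-(L+1)}$ normalization, the monomial $d_1^L\cdots d_L$, and the rescaled argument of $\Omega$; (4) match the arguments of $\omega$ and $\Omega$ and the signs $(-1)^{M}$, $(-1)^{M+1}$ coming from the $\sign(x_j)^{\delta_j}$ factors and the $(-1)$'s hidden in the reflection $\tilde n\mapsto -\tilde n$ of the original transform; (5) keep careful track of which Fourier coefficient index is a ``new'' variable $n$ versus a fixed parameter $q_i$ or $Q_j$ or $d_i$ or $D_j$, using the index-symmetry $A(*,\dots,*)$ enjoys under the Weyl element and the GL$(N)$ Hecke relations only insofar as they are built into $A$ (no eigenform hypothesis needed).

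The main obstacle I anticipate is step (3): showing that the dimension-$(L+M)$ hyper-Kloosterman sum arising from the naive application of Theorem~\ref{thm:originalvoronoi} genuinely \emph{splits} as a product of an $L$-dimensional and an $M$-dimensional hyper-Kloosterman sum under the relevant divisibility hypotheses, with the cross terms either vanishing or contributing only to the overall normalization. This is a purely arithmetic lemma about nested Kloosterman sums — it generalizes the ``Kloosterman-sum factorization'' that makes the GL$(4)$ case of \cite{BLM},\cite{zhou} work — and it is where the constraint $L+M+2=N$ and the precise shape of \reff{eq:divisibilityConditions} must be used. Once that factorization lemma is in place, assembling the finite average and reading off the monomial weights and $\Omega$-arguments is bookkeeping; I would isolate the factorization as a standalone lemma (proved by induction on $M$, reducing the top index $x_{L+M}$ modulo the appropriate divisor and peeling off one variable at a time), and then the theorem follows by summing the identity of Theorem~\ref{thm:originalvoronoi} over the residues $x_1,\dots,x_M$ and over the divisor tuples $\vec D$, and invoking absolute convergence (guaranteed since $\omega$ has compact support in $(0,\infty)$, so each inner sum over $n$ is finite) to interchange the finite sums freely.
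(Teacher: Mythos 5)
Your overall strategy matches the paper's: the balanced formula is obtained as a finite average of instances of Theorem~\ref{thm:originalvoronoi}, where the averaging over the tuples $(\vec D,\vec x)$ that parametrize $\Kl_M$ reproduces $\Kl_M$ on the additive side and (one hopes) simplifies the $(N-2)$-dimensional hyper-Kloosterman sum on the dual side down to $\Kl_L$. Your steps (1), (2), (4), (5) are essentially the paper's bookkeeping, and your recognition that the entire difficulty is concentrated in step (3) is correct.

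However, the mechanism you propose for step (3) is not right, and this is a genuine gap. The $(L+M)$-dimensional hyper-Kloosterman sum $\Kl_{N-2}\bigl(x_M,\pm n,\tfrac{Q_1\cdots Q_M c}{D_1\cdots D_M};(D_M,\dots,D_1,q_1,\dots,q_L),(b_M,\dots,b_1,d_1,\dots,d_L)\bigr)$ does \emph{not} split as a product $\Kl_L\cdot(\text{$M$-dimensional piece})$ by any twisted-multiplicativity or coprime-splitting argument: the moduli in the nested sum are built from a common core $\tfrac{Q_1\cdots Q_M c}{D_1\cdots D_M}$ and are generally far from coprime, so there is no CRT-type factorization to exploit. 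What the paper actually does is merely open the outer $M$ layers of the hyper-Kloosterman sum by its definition, writing it as a nested character-weighted sum over $(b_i,y_i)$ with $\Kl_L(\bar y_1,\dots)$ at the bottom; nothing factors at this stage. The collapse then comes from averaging over $(D_i,x_i)$: the paper's Lemma~\ref{lemma:cancel} shows that for fixed $y$ and $b$,
\[
\sum_{D\mid QC}\ \sums_{x\,(\mathrm{mod}\ QC/D)} e\!\left(\frac{Dxa}{C}+\frac{byx}{QC/D}\right)
\]
equals $QC$ precisely when $b=Q$ and $y\equiv -a\ (\mathrm{mod}\ C)$, and vanishes otherwise. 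The proof is a change of variables $z=Dx$ identifying this with a complete sum $\sum_{z\,(\mathrm{mod}\ QC)}e(z(Qa+by)/QC)$. This is a character-orthogonality lemma about the \emph{joint} $(D,x,b,y)$ summation, not a factorization of the hyper-Kloosterman sum itself; applied iteratively for $j=M,M-1,\dots,1$ it forces $b_j=Q_j$, kills the $y_j$-sums, and introduces the sign $(-1)^M$ in front of $a$. Without discovering this $(D,x)\mapsto Dx$ bijection and the associated orthogonality, the ``telescoping'' you anticipate in step (3) has no engine to drive it, and the lemma you propose to prove by induction (as a splitting of $\Kl_{N-2}$) is in fact false as stated.
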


\begin{remark}
\nothing{To prove of Theorem \ref{thm:mainthm_testfunction} is to prove
Theorem \ref{thm:balancedvoronoi}.
}
As we mentioned earlier,  Theorem~\ref{thm:mainthm_testfunction} is proved by averaging  over  a finite number of instances of the original Voronoi formula of \cite{millerschmid2} (Theorem \ref{thm:originalvoronoi}).  Consequently, any analysis of test functions for that formula automatically transfers over to our present setting.
The construction by finite average also shows that any coefficients $A(*,\ldots,*)$ satisfying the summation formula in \cite{millerschmid2} must also satisfy the summation formula in Theorem~\ref{thm:balancedvoronoi}.  This extends the range of applicability of $F$ to cases where functoriality has not yet been shown.
 For example,  K{\i}ral and the second-named author have shown in \cite{kiralzhou} that the Voronoi summation formula of \cite{millerschmid2} also holds when $F$ is a Rankin-Selberg convolution of two full-level cuspidal automorphic representations (see \cite[Examples 1.8-1.9]{kiralzhou}). Therefore Theorem  \ref{thm:mainthm_testfunction} and Theorem \ref{thm:balancedvoronoi} hold for such $F$ as well, despite it not yet being known to be automorphic.
\end{remark}

\begin{remark}
We have stated the summation formula in Theorem~\ref{thm:mainthm_testfunction} so that it only involves a sum over positive integers $n$ on the lefthand side.  This is somewhat unnatural from the point of view of automorphic distributions, through which one obtains summation formulas via integration against   distributions that involve  terms for both positive and negative $n$.  Also, including both positive and negative $n$ on the lefthand side results in simplifying the righthand side, as well as the analytic assumptions on the behavior of $\omega$ near the origin.   Nevertheless, since Voronoi summation formulas are typically applied to sums indexed by positive integers $n$, we have chosen to sacrifice aesthetics for practicality and state our formula as above.
\end{remark}















\section{Voronoi formulas as Dirichlet series identities}\label{sec:formerappendix}

Let $F$ be a cuspidal automorphic form on  $\ssl{N}{Z}\backslash \ssl N R$ and let $\pi$ denote the archimedean representation attached to $F$, which we assume is irreducible.
We say that $(\lambda,\delta)\in \mathbb{C}^N\times (\mathbb Z/2\mathbb Z)^N$  is a representation parameter of  $F$  if $\pi$ embeds into a subspace of the principal series representation
$$V_{\lambda,\delta} \ \  = \ \  \left\{f:GL(N,\mathbb R)\to \mathbb C\,\left|\,f\bra{g\begin{pmatrix}
a_1 &0 &0\\
* &\ddots &0 \\
* &* &a_n
\end{pmatrix}} = f(g) \prod_{j\le N} \bra{|a_j|^{\frac{N+1}{2}-j-\lambda_j}\sign(a_j)^{\delta_j}}\right. \right\}, $$
which is a representation space for $\ggl N R$ under the  left translation action $[\pi_{\lambda,\delta}(g)f]h=f(g^{-1}h)$.  When $\pi$ is spherical, any simultaneous permutation of the entries of $(\lambda,\delta)$ is also a representation parameter; in this case $\lambda$ coincides with the notion  of Langlands parameter, though it does not in general (see \cite[A.1-A.2]{millerschmid3} for a complete description of all allowable representation parameters of cuspidal automorphic representations of $\ggl N R$).  The $\ssl{N}{Z}$-invariance forces $\delta_1+\cdots+\delta_n\equiv 0~(\mod 2)$ \cite[(2.2)]{millerschmid2}.

Define the Gamma factor
$$G_\delta(s) := \begin{cases}
2 (2\pi)^{-s}\Gamma(s) \cos (\pi s/2)\,, \quad &\text{ if }\delta\,\in\,2\Z,\\
2i (2\pi)^{-s}\Gamma(s) \sin (\pi s/2)\,, \quad &\text{ if }\delta\,\in\,2\Z+1.
\end{cases}
$$
Alternatively, if $\Gamma_{\mathbb R}(s) = \pi^{-s/2}\Gamma(s/2)$ denotes the usual Artin Gamma factor appearing in the functional equation of the Riemann $\zeta$-function, then we have equivalently
\begin{equation}\label{Gdelta}
  G_\delta(s) \ \ = \ \ \left\{
                          \begin{array}{ll}
                            \frac{\Gamma_{\mathbb R}(s)}{\Gamma_{\mathbb R}(1-s)}\,, & \delta\,\in\,2\Z, \\
                            i\, \frac{\Gamma_{\mathbb R}(s+1)}{\Gamma_{\mathbb R}(2-s)}\,, &\delta\,\in\,2\Z+1\,.
                          \end{array}
                        \right.
\end{equation}
Define
\begin{equation}\label{eq:gamma2}
\gama_+(s) \ \ = \ \  \prod\limits_{j=1}^N G_{\delta_j}(s+\lambda_j)^{-1}
\;\text{ and }\; \gama_-(s) \ \ = \ \  \prod\limits_{j=1}^N G_{1+\delta_j}(s+\lambda_j)^{-1}.
\end{equation}
The ratio of Gamma factors $\gama_\pm(s)$ appears naturally in the functional equations of the standard $L$-function of $F$ and its twists by Dirichlet characters.
  For this reason we can alternatively write
\begin{equation}
\gama_+(s) \ \ = \ \ \frac{L(1-s,\widetilde\pi)\,\epsilon(s,\pi)}{L(s,\pi)}
\quad\text{ and }\quad \gama_-(s) \ \ = \ \ \frac{L(1-s,\widetilde\pi\otimes \sign)\,\epsilon(s,\pi\otimes \sign)}{L(s,\pi\otimes \sign)}\,,\label{gamma}
\end{equation}
where the local factors are as defined in \cite[Appendix]{jacquet}.

Let $\omega\in C_c^\infty(\mathbb R_{>0})$
 and let $\tilde{\omega}(s)$ denote its Mellin transform.  We shall now clarify the relationship between $\omega$ and its Voronoi transform $\Omega$ from  (\ref{formalvoronoitransform}).  Decompose $\Omega$ into its even and odd parts for $y>0$
\begin{equation}\label{eq:Omega+-def_new}
\aligned
\Omega_+(y) \ \ & = \ \ \textstyle{\frac{1}{2}}\bra{ \Omega(y)+\Omega(-y) }\\
\Omega_-(y) \ \ & = \ \  \textstyle{\frac{1}{2}}\bra{ \Omega(y)-\Omega(-y) }.
\endaligned
\end{equation}
It then follows from \cite[(1.5)]{millerschmid2} that
\begin{equation}\label{eq:transform_integration_w_gamma}
\Omega_\pm	(x) \ \  = \ \  \frac{1}{2\pi i} \int_{\Re(s)=-\sigma}\tilde{\omega}(s) \, x^{s-1} \,  \gama_\pm(s)\dd\!s
\end{equation}
for $x>0$ and some $\sigma>0$. Please note that we take some $\sigma>0$ to avoid the poles of $\gama_\pm(s)$, which are on some right half plane. Also, $\Omega$ is defined over $\mathbb{R}\backslash \{0\}$ and $\Omega_\pm$ over $\mathbb{R}_{>0}$.

The original Voronoi formula  for $\GL(N)$, $N\ge 3$, in \cite{millerschmid1,millerschmid2}  was proven using automorphic distributions.  The methods of \cite[\S4]{millerschmid2} can be used to derive Theorem~\ref{thm:mainthm_testfunction} as well.  We shall however prove it using a reformulation in terms of Dirichlet series, which we state in Theorem~\ref{thm:balancedvoronoi}.  This reformulation will itself be proved by taking a finite average of a similarly-reformulated version of the $\GL(N)$ Voronoi summation formula in terms of Dirichlet series, which  can be found in \cite{kiralzhou}  (and \cite[(1.12)]{millerschmid2}\footnote{The formula stated here corrects a misprint propagating from  \cite[(1.9)]{millerschmid2}, where the first two arguments in the definition of the Kloosterman sum  were mistakenly switched. }):

\begin{theorem}[Voronoi formula on GL($N$) of Miller-Schmid \cite{millerschmid2}]\label{thm:originalvoronoi}
Let $F$ be a cuspidal automorphic form on $\ssl{N}{Z}\backslash \ssl N R$
with abelian Fourier coefficients $A(*,\ldots, *)$.  Assume that $F$ generates an irreducible representation $\pi$ of $\ssl N R$ and let   $\gama_\pm$ be the ratio of  Gamma  factors from \eqref{eq:gamma2}-\eqref{gamma}. Let $c>0$ be an integer and let $a$ be any integer with $(a,c)=1$. Denote by $\overline{a}$ the multiplicative inverse of $a$ modulo $c$.
Let $\vec{q}=(q_1,q_2,\cdots,q_{N-2})$ be an $(N-2)$-tuple of positive integers.
Then the additively-twisted Dirichlet series
\begin{equation}\label{eq:LqTwistedAdditively}
\mathcal{L}_{\vec{q}}( s , F , \bar a/c) =q_1^{(N-2)s}q_2^{(N-3)s}\cdots q_{N-2}^{s} \sum_{n=1}^\infty \frac{A(q_{N-2},\cdots, q_1,n)}{n^s}\,         
e\bra{\frac{\overline{a}n}{c}}	 ,
\end{equation}
which is initially convergent for $\Re{s}\gg 1$,
	 has an analytic continuation to an entire function of $s \in \C$  satisfying the functional
	equation
	\begin{equation}\label{eq:glnVoronoi}
	\gathered
		\mathcal{L}_{\vec{q}}(s,F, \bar a/c) \ \ = \qquad\qquad\qquad\qquad\qquad\qquad\qquad
\qquad\qquad\qquad\qquad\qquad\qquad\qquad\qquad\qquad\qquad \\
  \tfrac{\gama_+(s) - \gama_-(s)}{2} \sum_{d_1|q_1c}\sum_{d_2 | \frac{q_1q_2c}{d_1}}
		\cdots \sum_{d_{N-2}| \frac{q_1\ldots q_{N-2} c}{d_1 \ldots d_{N-3}}}
		 \sum_{n=1}^\infty
		\frac{A(n,d_{N-2}, \ldots, d_2, d_1) \Kl_{N-2}(a,n, c; \vec{q}, \vec{d}) }
		{n^{1-s} c^{Ns-1} d_1^{1-(N-1)s}d_2^{1-(N-2)s}\cdots d_{N-2}^{1-2s}}
		\\
		 +  \ \tfrac{\gama_+(s) + \gama_-(s)}{2}
\sum_{d_1|q_1c}\sum_{d_2 | \frac{q_1q_2c}{d_1}}
		\cdots \sum_{d_{N-2}| \frac{q_1\ldots q_{N-2} c}{d_1 \ldots d_{N-3}}}		
 \sum_{n=1}^\infty
		\frac{A(n,d_{N-2}, \ldots, d_2, d_1) \Kl_{N-2}(a, -n, c; \vec{q}, \vec{d}) }
		{n^{1-s} c^{Ns-1} d_1^{1-(N-1)s}d_2^{1-(N-2)s}\cdots d_{N-2}^{1-2s}}\,,	
		\endgathered
	\end{equation}
where $ \vec{d}=(d_1,\ldots,d_{N-2})$
(both terms on the righthand side converge for $\Re{s}\ll -1$ and have entire continuations to $s\in \C$).
\end{theorem}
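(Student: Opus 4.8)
The plan is to deduce the Dirichlet-series functional equation \eqref{eq:glnVoronoi} from the standard functional equations of the multiplicatively twisted standard $L$-functions $L(s,\pi\otimes\chi)$, $\chi$ a Dirichlet character. These are entire (since $\pi$ is cuspidal on $\GL(N)$, $N\ge3$) and satisfy the Godement--Jacquet/Jacquet--Shalika functional equation whose $\gamma$-factor is, up to a Gauss sum, the product of the $G_{\delta_j}$ assembled in \eqref{eq:gamma2}--\eqref{gamma}. An alternative route, through the Miller--Schmid automorphic distribution of $F$ and its transformation law under $\SL(N,\Z)$, also works; it is in fact what one needs to treat $F$ that is not a Hecke eigenform, and it is the original proof in \cite{millerschmid2}. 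I sketch the $L$-function route, which meshes best with the Dirichlet-series framing of this section.

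\emph{Opening the additive twist.} First I would express $e(\bar a n/c)$ through Dirichlet characters: by orthogonality together with the Gauss-sum identity $\sum_{x\bmod c}\chi(x)\,e(nx/c)=\bar\chi(n)\,\tau(\chi)$ for primitive $\chi$ (and its Ramanujan-type degenerations for imprimitive $\chi$, which are supported on $n$ divisible by $c/\mathrm{cond}(\chi)$), the series $\sum_{n\ge1}A(q_{N-2},\dots,q_1,n)\,n^{-s}\,e(\bar a n/c)$ rewrites as a finite combination, indexed by divisors $e\mid c$ and primitive characters $\chi$ modulo $e$, of $\chi(a)$ times a Gauss sum $\tau(\chi)$ times a partial Dirichlet series $L_e(s,F_{\vec q}\otimes\bar\chi)$ in which the local factors at the primes dividing $c/e$ are suitably adjusted. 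Here $F_{\vec q}$ is the ``shifted'' form with coefficients $n\mapsto A(q_{N-2},\dots,q_1,n)$, whose Dirichlet series equals $L(s,\pi)$ up to an explicit finite Euler product (by the $\GL(N)$ Hecke relations when $F$ is an eigenform, and via the automorphic distribution in general); the prefactor $q_1^{(N-2)s}\cdots q_{N-2}^{s}$ in \eqref{eq:LqTwistedAdditively} is precisely the normalization that keeps the ensuing functional equation clean, and the divisibility conditions \eqref{eq:divisibilityConditions} are the combinatorial shadow of how the adjusted Euler factors interact with the $q_i$.

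\emph{Applying the functional equation and reassembling.} Applying the functional equation to each $L_e(s,F_{\vec q}\otimes\bar\chi)$ replaces it by $L(1-s,\widetilde F_{\vec q}\otimes\chi)$ times $\epsilon$- and $\gamma$-factors (the former carrying a Gauss sum $\tau(\bar\chi)$); the even/odd decomposition $\chi(-1)=\pm1$ then produces exactly the two terms of \eqref{eq:glnVoronoi} carrying $\tfrac{\gama_+(s)\mp\gama_-(s)}{2}$, because $G_\delta$ and $G_{1+\delta}$ are interchanged by a sign flip, while the relation $\tau(\chi)\,\tau(\bar\chi)=\chi(-1)\,c$ on the dual side is what couples the sign of $n$ to the parity and yields the $\Kl_{N-2}(a,n,c;\vec q,\vec d)$ versus $\Kl_{N-2}(a,-n,c;\vec q,\vec d)$ split. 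Expanding $L(1-s,\widetilde F_{\vec q}\otimes\chi)=\sum_m\widetilde A(\dots,m)\,\chi(m)\,m^{s-1}$ and summing the resulting products of Gauss sums over all the characters and all the divisors, the nested congruence sums collapse level by level into the hyper-Kloosterman sum: each residue variable $x_i$ modulo $\tfrac{q_1\cdots q_i c}{d_1\cdots d_i}$ records one ``level-lowering'' step in the imprimitive-Gauss-sum bookkeeping, and the powers of $c$ and of the $d_i$ in the denominators of \eqref{eq:glnVoronoi} are what survives after collecting those steps. For $N=3$ this is the classical single-Kloosterman-sum computation; the general case is its iteration. Finally, absolute convergence of \eqref{eq:LqTwistedAdditively} for $\Re s$ large follows from the Rankin--Selberg bound on $|A(\cdots)|$; the entire continuation follows from the entirety of each $L(s,\pi\otimes\chi)$; and absolute convergence of the righthand side for $\Re s\ll-1$ follows from the same bound applied to $\widetilde F$ together with Weil's bound for $\Kl_{N-2}$.

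I expect the middle step to be the real obstacle: tracking the imprimitive Gauss sums and the accompanying local Euler-factor corrections at the primes $p\mid c$, tracking how these interact with the $q_i$, and verifying that they reorganize into precisely the nested modulus structure defining $\Kl_{N-2}(a,n,c;\vec q,\vec d)$ with the exact divisibility conditions \eqref{eq:divisibilityConditions} and the exact exponents of $c$ and the $d_i$ appearing in \eqref{eq:glnVoronoi}. The ``soft'' inputs---functional equations, analytic continuation, convergence---are comparatively routine.
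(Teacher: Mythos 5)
The paper does not prove Theorem~\ref{thm:originalvoronoi}: it is imported verbatim, with the authors pointing to \cite{millerschmid2} (the automorphic-distribution proof) and to \cite{kiralzhou} (the Dirichlet-series reformulation) as the source. There is therefore no ``paper's own proof'' to compare against. The role of Theorem~\ref{thm:originalvoronoi} here is purely as a black box: Section~\ref{sec:proof} takes finite averages of instances of \eqref{eq:glnVoronoi} and applies Lemma~\ref{lemma:cancel} to collapse $M$ of the Kloosterman variables, yielding the balanced formula. Your proposal is therefore a proof of the \emph{input}, not of anything the paper actually argues.

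As for the route itself: the $L$-function approach you sketch --- open $e(\bar a n/c)$ into Gauss sums, twist by Dirichlet characters, apply the Godement--Jacquet functional equation of $L(s,\pi\otimes\chi)$, and reassemble --- is genuinely a known alternative proof, essentially that of Goldfeld--Li \cite{goldfeldli1,goldfeldli2} and of \cite{zhou}, and the introduction explicitly contrasts it with the method used here (``which instead uses functional equations of twisted automorphic $L$-functions''). But it does not establish the theorem in the generality stated. The statement only assumes that $F$ is cuspidal and generates an irreducible representation; the paper even stresses, in the remark after Theorem~\ref{thm:mainthm_testfunction}, that ``this does not imply that $F$ is a Hecke eigenform, which is a stronger assumption that is unnecessary using our methods.'' Your key step --- replacing the series $\sum_n A(q_{N-2},\dots,q_1,n)n^{-s}$ by $L(s,\pi)$ ``up to an explicit finite Euler product'' and introducing the $d_i$-sums as ``the combinatorial shadow'' of adjusted Euler factors --- is exactly the use of Hecke multiplicativity that the theorem does not assume. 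For non-eigenforms this route breaks down, and one needs the automorphic-distribution argument (or the double-Dirichlet-series device of \cite{kiralzhou}, which is engineered precisely to avoid the imprimitive-character Euler-factor bookkeeping you flag as ``the real obstacle'').

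Finally, be aware that the ``middle step'' you defer is where all the content lives, and it is not merely tedious but delicate: the imprimitive-character corrections, their interaction with the $q_i$, and the emergence of the nested moduli in $\Kl_{N-2}(a,n,c;\vec q,\vec d)$ under exactly the divisibility chain \eqref{eq:divisibilityConditions} are the hard part of every $L$-function-based proof of this formula. A sketch that treats the functional equation and convergence as the content and the reassembly as routine has the emphasis inverted.
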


\section{Proof}\label{sec:proof}

We  begin  by restating Theorem~\ref{thm:mainthm_testfunction} in the language of Dirichlet series, analogously to Theorem~\ref{thm:originalvoronoi}.
\begin{theorem}\label{thm:balancedvoronoi}
Let $F$ be a cuspidal automorphic form on $\ssl{N}{Z}\backslash \ssl N R$, $N\ge 3$,
with abelian Fourier coefficients $A(*,\ldots, *)$.  Assume that $F$ generates an irreducible representation $\pi$ of $\ssl N R$ and let  $\gama_\pm$ be the ratio of  Gamma  factors from \eqref{eq:gamma2}-\eqref{gamma}.
Let $L$ and $M$ be two non-negative integers whose sum $L+M=N-2$.
Let $c>0$ be an integer and let $a$ be any integer with $(a,c)=1$.
Denote by $\overline{a}$ the multiplicative inverse of $a$ modulo $c$.
Let $\vec{q}=(q_1,q_2,\cdots,q_L)$ be an $L$-tuple of positive integers and
$\vec{Q}=(Q_1,Q_2,\cdots,Q_M)$ an $M$-tuple of positive integers.
Define the Dirichlet series
\begin{equation}\label{eq:bal_voronoi_left_in_original_theorem}
L_{\vec{q}, \vec{Q}}( s , F ,  \bar a/c ) =\sumD \sum_{n=1}^\infty \frac{A(q_{L},\cdots, q_1,D_1,\cdots, D_M ,n)\Kl_M(\bar{a},n, c; \vec{Q}, \vec{D})}{n^s}\frac{  q_1^{Ls} \cdots	q_L^{s}}
{ D_1^{(M+1)s-M} \cdots D_M^{2s-1}},
\end{equation}
where $\sumD$ stands for $ \sum_{D_1|Q_1c}\sum_{D_2 | \frac{Q_1Q_2c}{D_1}}
		\cdots \sum_{D_{M}| \frac{Q_1\ldots Q_M c}{D_1 \ldots D_{M-1}}}$.
	This Dirichlet series is  convergent for $\Re{s}\gg 1$, and  has an analytic continuation to an entire function in  $s \in \C$ which satisfies the functional
	equation
	\begin{equation}\label{eq:blaVoronoi}
\gathered
		L_{\vec{q},\vec{Q}}(s,F,  \bar a/c) \ \  = \qquad\qquad\qquad\qquad\qquad\qquad\qquad\qquad\qquad\qquad\qquad\qquad\qquad\qquad\\
c^{M+1-Ns}\left[  \tfrac{\gama_+(s) + (-1)^{M+1} \gama_-(s)}{2} L_{\vec{Q}, \vec{q}}(1-s,\widetilde{F},a/c)+\tfrac{\gama_+(s) + (-1)^{M} \gama_-(s)}{2} L_{\vec{Q}, \vec{q}}(1-s,\widetilde{F},-a/c)\right],\\
  \endgathered
\end{equation}
where $L_{\vec{Q},\vec{q}}(..,\widetilde{F},...)$ is defined using the contragredient coefficients $\widetilde{A}(m_1,\ldots,m_{n-1})=A(m_{n-1},\ldots,m_1)$.
\end{theorem}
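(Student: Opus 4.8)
\emph{The plan} is to prove Theorem~\ref{thm:balancedvoronoi} by induction on $M$, peeling off one hyper-Kloosterman variable at each step and invoking Theorem~\ref{thm:originalvoronoi}. The base case is $M=0$: here $L=N-2$, $\Kl_0(\bar a,n,c;\,,\,)=e(\bar an/c)$, and a direct comparison of the normalizing exponents shows $L_{\vec q,\varnothing}(s,F,\bar a/c)=\mathcal L_{\vec q}(s,F,\bar a/c)$ and that \eqref{eq:blaVoronoi} is literally \eqref{eq:glnVoronoi} (using $\Kl_{N-2}(-a,m,c;\vec q,\vec d)=\Kl_{N-2}(a,-m,c;\vec q,\vec d)$, which follows from substituting $x_j\mapsto -x_j$, to reconcile the $\pm$ sign with the $\pm n$ in Theorem~\ref{thm:originalvoronoi}). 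The induction also supplies convergence for $\Re s\gg1$ and entire continuation, since it exhibits $L_{\vec q,\vec Q}$ as a finite $\mathbb C$-linear combination of shorter such series whose base case is covered by Theorem~\ref{thm:originalvoronoi}; the interchanges of summation needed for $\Re s\gg1$ are justified by the Rankin--Selberg bound $\sum_{n\le X}|A(\cdots,n)|^2\ll_\varepsilon X^{1+\varepsilon}$.

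\emph{The reduction step.} Fix $\vec q$ of length $L$ and $\vec Q$ of length $M\ge1$. Splitting off the first summation variable in the definition of the hyper-Kloosterman sum gives
\[
\Kl_M(\bar a,n,c;\vec Q,\vec D)=\sum_{z\,(\mathrm{mod}\ Q_1c/D_1)}^{*}e\!\Big(\tfrac{D_1z\,\bar a}{c}\Big)\,\Kl_{M-1}\!\Big(\bar z,\,n,\,\tfrac{Q_1c}{D_1};\,(Q_2,\dots,Q_M),(D_2,\dots,D_M)\Big).
\]
Substituting this into \eqref{eq:bal_voronoi_left_in_original_theorem} and writing $\sumD=\sum_{D_1\mid Q_1c}\sum_{(D_2,\dots,D_M)}$, I would recognise the inner sum over $(D_2,\dots,D_M)$ and $n$ as an instance of $L_{\vec q',\vec Q'}$ for the \emph{shorter} parameters $\vec q'=(D_1,q_1,\dots,q_L)$, $\vec Q'=(Q_2,\dots,Q_M)$, modulus $Q_1c/D_1$, and twist $\bar z/(Q_1c/D_1)$; the key observation is that \emph{prepending} $D_1$ to $\vec q$ makes $D_1$ play the role of an extra $q$-entry, so that the arguments of $A(\cdots)$ line up correctly. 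A short bookkeeping of the $D_1$-powers (using $(M+1)s-M+(L+1)s=Ns-M$) then gives
\[
L_{\vec q,\vec Q}(s,F,\tfrac{\bar a}{c})=\sum_{D_1\mid Q_1c}D_1^{\,M-Ns}\sum_{z\,(\mathrm{mod}\ Q_1c/D_1)}^{*}e\!\Big(\tfrac{D_1z\,\bar a}{c}\Big)\,L_{\vec q',\vec Q'}\!\Big(s,F,\tfrac{\bar z}{Q_1c/D_1}\Big).
\]
Since $|\vec q'|+|\vec Q'|+2=N$ and $(z,Q_1c/D_1)=1$, the inductive hypothesis applies to each term, turning $L_{\vec q',\vec Q'}(s,F,\bar z/(Q_1c/D_1))$ into $(Q_1c/D_1)^{M-Ns}$ times a $\gama_\pm$-combination of $L_{\vec Q',\vec q'}(1-s,\widetilde F,\pm z/(Q_1c/D_1))$.

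\emph{The arithmetic collapse (the crux).} After inserting the inductive hypothesis, the factor $D_1^{M-Ns}(Q_1c/D_1)^{M-Ns}=(Q_1c)^{M-Ns}$ is free of $D_1$ and pulls out; it remains to evaluate, for each $\epsilon\in\{\pm1\}$, the sum
\[
\mathcal S_\epsilon:=\sum_{D_1\mid Q_1c}\ \sum_{z\,(\mathrm{mod}\ Q_1c/D_1)}^{*}e\!\Big(\tfrac{D_1z\,\bar a}{c}\Big)\,L_{\vec Q',\vec q'}\!\Big(1-s,\widetilde F,\tfrac{\epsilon z}{Q_1c/D_1}\Big).
\]
For this I would open up the inner $\Kl_{L+1}(\epsilon z,m,Q_1c/D_1;(D_1,q_1,\dots,q_L),\vec e)$ and carry out the $z$- and $D_1$-summations before the $\vec e$- and $m$-sums. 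Rewriting all additive characters over the common modulus $Q_1c$, the $z$-sum is a Ramanujan sum $c_{Q_1c/D_1}(Q_1\bar a+\epsilon e_1u_1)$ in the first variable $u_1$ of $\Kl_{L+1}$; summing over $D_1\mid Q_1c$ and applying $\sum_{e\mid N}c_e(b)=N\cdot\mathbf 1[N\mid b]$ forces the congruence $Q_1c\mid Q_1\bar a+\epsilon e_1u_1$. Because $e_1\mid Q_1c$, $(\bar a,c)=1$, and $u_1$ must be coprime to its modulus $Q_1c/e_1$, this rigidly forces $e_1=Q_1$ and pins $u_1\equiv-\epsilon\bar a\ (\mathrm{mod}\ c)$, hence $\overline{u_1}\equiv-\epsilon a\ (\mathrm{mod}\ c)$; the surviving variables $u_2,\dots,u_{L+1}$ and divisors $e_2,\dots,e_{L+1}$ then reassemble --- their moduli and divisibility constraints collapsing to exactly those of $\Kl_L$ against $(q_1,\dots,q_L)$ and $c$ --- into $Q_1c\cdot\Kl_L(-\epsilon a,m,c;\vec q,(e_2,\dots,e_{L+1}))$. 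Matching the exponents of $Q_1$ (they come out to $Ns-M$, cancelling $(Q_1c)^{M-Ns}$ and leaving $c^{\,M+1-Ns}$) yields $\mathcal S_\epsilon=c\,Q_1^{\,Ns-M}\,L_{\vec Q,\vec q}(1-s,\widetilde F,-\epsilon a/c)$; substituting back --- the relabeling $\epsilon z\rightsquigarrow-\epsilon a$ exchanging the two $\gama_\pm$-combinations, so that the inductive $(-1)^{M-1}=(-1)^{M+1}$ lands with the $a/c$-term --- produces exactly \eqref{eq:blaVoronoi}.

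\emph{Main obstacle.} All the real work is in the arithmetic collapse: recognising that the correct peeling prepends $D_1$ to $\vec q$, and then verifying that the Ramanujan sum over $D_1$ together with the coprimality of $u_1$ forces $e_1=Q_1$ and thereby collapses the whole $D_1$- and $e_1$-summations to a single surviving term. The rest is exponent bookkeeping, which the normalizing powers of $\vec D,\vec d,Q_i,q_i,c$ in the statement are precisely engineered to absorb. No analytic input beyond Theorem~\ref{thm:originalvoronoi} is required --- the collapse uses only finite character sums, Ramanujan sums, and the Chinese Remainder Theorem.
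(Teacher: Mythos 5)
Your proposal is correct and is in essence the same argument as the paper's, repackaged as a formal induction on $M$. The paper opens $\Kl_M$ completely, applies the original $\GL(N)$ Voronoi formula (Theorem~\ref{thm:originalvoronoi}) once to the inner $n$\nobreakdash-sum with twist $\overline{x_M}$, then iteratively collapses the $M$ pairs $(D_j,x_j)$ against $(b_j,y_j)$ from $j=M$ down to $j=1$ using Lemma~\ref{lemma:cancel}; your induction peels one Kloosterman variable at a time, prepends $D_1$ to $\vec q$, invokes the shorter formula, and then collapses the single pair $(D_1,z)$ against $(e_1,u_1)$. Your Ramanujan-sum computation, namely that $\sum_{D_1\mid Q_1c}c_{Q_1c/D_1}(Q_1\bar a+\epsilon e_1u_1)=Q_1c\cdot\mathbf 1\!\left[Q_1c\mid Q_1\bar a+\epsilon e_1u_1\right]$, which then forces $e_1=Q_1$ and $u_1\equiv-\epsilon\bar a\ (\mathrm{mod}\ c)$, is literally the statement and proof of Lemma~\ref{lemma:cancel} with $(C,Q,b,y)=(c,Q_1,e_1,\epsilon u_1)$; and your sign bookkeeping $(-1)^{(M-1)+1}=(-1)^M$, together with $\Kl_L(-\epsilon a,m,c;\vec q,\vec d)=\Kl_L(\epsilon a,-m,c;\vec q,\vec d)$, reproduces the paper's $(-1)^M\bar a$ sign. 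So this is the same proof with the $M$-fold iteration unrolled into a recursion rather than executed all at once; nothing is gained or lost, and the convergence and analytic-continuation assertions transfer in the same way (each step is a finite sum of shorter instances).
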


The following two lemmas are used in the proof of Theorem~\ref{thm:balancedvoronoi}.
		
\begin{lemma}\label{lemma:cancel}
 Let $C, Q$, and $b$ be positive integers and $y, a$ integers.
Assuming $b|QC$, $(y, QC/b)=1$ and $(a,C)=1$,
we have
\begin{equation}\label{eqn:lem32}
\sum_{D|QC}\;\sums_{x\,(\mod \tfrac{QC}{D})}e\bra{\frac{Dxa}{C}+	\frac{byx}{\tfrac{QC}{D}}} \ \ = \ \
\begin{cases}
QC, \quad &\text{ if }b=Q \text{ and } y\equiv -a \;(\mod C ),\\
0, \quad &\text{ otherwise.}
\end{cases}
\end{equation}
\end{lemma}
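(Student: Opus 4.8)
The plan is to collapse the two additive characters in the summand into a single Ramanujan sum, reindex the divisor sum, and then reduce the whole statement to an elementary congruence which is analyzed prime by prime.

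First I would put the two exponentials over the common denominator $QC$: since $\tfrac{Dxa}{C}+\tfrac{byx}{QC/D}=\tfrac{Dx(Qa+by)}{QC}=\tfrac{x(Qa+by)}{QC/D}$, the inner sum over $x$ coprime to $QC/D$ is exactly the Ramanujan sum $c_{QC/D}(Qa+by)$, where $c_m(k):=\sums_{x\,(\mod m)}e\bra{kx/m}$; this is well defined modulo $m$ precisely because $Qa+by\in\Z$. As $D$ runs over the divisors of $QC$, so does $m=QC/D$, so the left-hand side of \eqref{eqn:lem32} equals $\sum_{m\mid QC}c_m(Qa+by)$.

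Next I would invoke the classical evaluation $\sum_{m\mid n}c_m(k)=n$ if $n\mid k$ and $0$ otherwise. This follows from stratifying residues by their gcd with $n$: writing each residue $x\ (\mod n)$ as $x=gx'$ with $g=\gcd(x,n)$ and $(x',n/g)=1$ gives $\sum_{x\,(\mod n)}e\bra{kx/n}=\sum_{g\mid n}\sums_{x'\,(\mod n/g)}e\bra{kx'/(n/g)}=\sum_{m\mid n}c_m(k)$, while $\sum_{x\,(\mod n)}e\bra{kx/n}$ itself equals $n$ or $0$ according as $n\mid k$ or not. Applying this with $n=QC$ and $k=Qa+by$ reduces the lemma to the claim that, under the hypotheses $b\mid QC$, $(y,QC/b)=1$ and $(a,C)=1$, one has $QC\mid Qa+by$ if and only if $b=Q$ and $y\equiv-a\ (\mod C)$.

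This divisibility equivalence is the main point, and I would prove it by comparing $p$-adic valuations prime by prime. The hypotheses $(y,QC/b)=1$ and $(a,C)=1$ say, respectively, that $v_p(QC/b)>0$ forces $v_p(y)=0$, and $v_p(C)>0$ forces $v_p(a)=0$. If $v_p(b)<v_p(Q)$, then $v_p(QC/b)>0$, so $v_p(by)=v_p(b)<v_p(Q)\le v_p(Qa)$ and hence $v_p(Qa+by)=v_p(b)<v_p(QC)$; if $v_p(b)>v_p(Q)$, then (as $b\mid QC$) $v_p(C)>0$, so $v_p(a)=0$, giving $v_p(Qa)=v_p(Q)<v_p(b)\le v_p(by)$ and again $v_p(Qa+by)=v_p(Q)<v_p(QC)$. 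In either case $QC\nmid Qa+by$, so $QC\mid Qa+by$ forces $v_p(b)=v_p(Q)$ for every prime $p$, that is $b=Q$; then $QC\mid Q(a+y)$ is equivalent to $C\mid a+y$, and the converse implication is immediate. Substituting back, the sum in \eqref{eqn:lem32} equals $QC$ exactly when $b=Q$ and $y\equiv-a\ (\mod C)$, and $0$ otherwise. There is no conceptual obstacle; the only thing to watch is organizing the valuation bookkeeping so that each of the three hypotheses is used in the right place.
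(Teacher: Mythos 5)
Your proof is correct and follows essentially the same route as the paper's: both recognize the left-hand side as the full sum $\sum_{z}e\bra{z(Qa+by)/(QC)}$ over $z$ modulo $QC$ via the stratification $z=Dx$ with $D=\gcd(z,QC)$, collapse it to the condition $QC\mid Qa+by$, and then show that under the stated hypotheses this is equivalent to $b=Q$ and $C\mid a+y$. The only cosmetic difference is that you check the forward implication by comparing $p$-adic valuations prime by prime, whereas the paper deduces it globally from the gcd identities $\gcd(by,QC)=b$ and $\gcd(Qa,QC)=Q$.
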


\begin{proof}
The sum $\sum\limits_{z \,(\mod QC)}e\bra{\frac{z(Qa+by)}{QC}}$ equals $QC$ when $QC|Qa+by$, and vanishes otherwise.  Factoring each $z$ as $z=Dx$ with $D=\gcd(z,QC)$ and $x\in(\Z/\frac{QC}{D}\Z)^*$, we see this sum equals the lefthand side of (\ref{eqn:lem32}).
It thus suffices to show that the nonvanishing conditions are  equivalent.  Clearly
 $QC|Qa+by$ if  $b=Q$ and $y\equiv -a \;(\mod C )$.  Conversely, suppose $QC|Qa+by$.  Thus $Q|by$, which implies that   $Q$ divides $\gcd(by,QC)=b$; also,
  since we have assumed that $b|QC$, we must have $b|Qa$ and hence $b$ divides $\gcd(Qa,QC)=Q$.  Being divisors of each other, $b$ and $Q$ are equal; this forces $C|(a+y)$.
\end{proof}

\nothing{
		 \begin{lemma}\label{lemma:canceltwo}
		 Let $\sum\limits_{\vec{D}}$ stand for $ \sum_{D_1|Q_1c}\sum_{D_2 | \frac{Q_1Q_2c}{D_1}}
		\cdots \sum_{D_{M}| \frac{Q_1\ldots Q_M c}{D_1 \ldots D_{M-1}}}$ as in the statement of Theorem~\ref{thm:balancedvoronoi}.
		If $b_M|D_M\cc$, $b_{M-1}|\frac{D_M D_{M-1}}{b_M}\cc$,  $\cdots$, and  $b_1|\frac{D_M\cdots D_1}{b_{M}\cdots b_2}\cc$, we have
\begin{align*}
\sumD
&
D_1^{M-1}D_2^{M-2} \cdots D_{M-1}
\sums_{x_1(\mod \frac{Q_1c}{D_1})}
\cdots
\sums_{x_M(\mod \frac{Q_1\cdots Q_M c}{D_1 \cdots D_M})}
e\bra{\frac{D_1x_1\bar{a}}{c} + \frac{D_2x_2 \overline{x_1}}{\frac{Q_1c}{D_1}}
		+ \cdots +
		\frac{D_{M}x_{M} \overline{x_{M-1}}}{\frac{Q_1\cdots Q_{M-1} c}{D_1 \cdots D_{M-1} }}}\\
&\quad	\times	\Kl_{N-2}\bra{ x_M, n,\cc;(D_M,\cdots,D_1,q_1,\cdots,q_L), (b_M,\cdots,b_1,d_1,\cdots,d_L)}\\
\\
& \ \ \  \ \ \ \ \ \ = \ \ \ \begin{cases}
c^M Q_1^M\cdots Q_M \Kl_L((-1)^M a,n,c;\vec{q},\vec{d}),&\text{ if  		 $b_i=Q_i$		for all $i=1,\cdots, M$}\\
0, &\text{ otherwise.}
\end{cases}
\end{align*}		
 \end{lemma}

		

\begin{proof}	
We partially open the hyper-Kloosterman sum  to obtain
\begin{align*}
	 &\Kl_{N-2}\bra{ x_M, n,\cc;(D_M,\cdots,D_1,q_1,\cdots,q_L), (b_M,\cdots,b_1,d_1,\cdots,d_L)}\\
	=&\sums_{y_M(\mod \tfrac{D_M}{b_M}\cc)}  \sums_{y_{M-1}(\mod \tfrac{D_M D_{M-1}}{b_M b_{M-1}}\cc)}  \cdots\sums_{y_1(\mod \tfrac{D_M\cdots D_1}{b_M\cdots b_1}\cc)}
		\\
&
\quad \;\quad \;
e\bra{\frac{b_M y_M x_M}{\cc} + \frac{b_{M-1} y_{M-1} \overline{y_M}}{\frac{D_M}{b_M}\cc}+ \cdots +  \frac{b_1y_1\bar{y}_2}{\tfrac{D_M\cdots D_2}{b_M\cdots b_2}\cc} }
\Kl_L\bra{\bar{y}_1,n,\cb;(q_1,\cdots,q_L),(d_1,\cdots,d_L)}.
\end{align*}
Then the equation in the lemma equals
\begin{align*}
\sumD D_1^{M-1}D_2^{M-2} \cdots D_{M-1}
		\sums_{y_M(\mod \tfrac{D_M}{b_M}\cc)} \cdots\sums_{y_1(\mod \tfrac{D_M\cdots D_1}{b_M\cdots b_1}\cc)}\
\sums_{x_1(\mod \frac{Q_1c}{D_1})}
\cdots
\sums_{x_M(\mod \frac{Q_1\cdots Q_M c}{D_1 \cdots D_M})}
		\\
e\bra{\frac{D_1x_1\bar{a}}{c} + \frac{D_2x_2 \overline{x_1}}{\frac{Q_1c}{D_1}}
		+ \cdots +
		\frac{D_{M}x_{M} \overline{x_{M-1}}}{\frac{Q_1\cdots Q_{M-1} c}{D_1 \cdots D_{M-1} }}}
	e\bra{\frac{b_M y_M x_M}{\cc} + \frac{b_{M-1} y_{M-1} \overline{y_M}}{\frac{D_M}{b_M}\cc}+ \cdots +  \frac{b_1y_1\bar{y}_2}{\tfrac{D_M\cdots D_2}{b_M\cdots b_2}\cc} }   \\
		\times \Kl_L\bra{\bar{y}_1,n,\cb;(q_1,\cdots,q_L),(d_1,\cdots,d_L)}.
\end{align*}
Consider the $D_M$- and $x_M$-summations,
$$\sum_{D_{M}| \frac{Q_1\ldots Q_M c}{D_1 \ldots D_{M-1}}}\ \sums_{x_M(\mod \frac{Q_1\cdots Q_M c}{D_1 \cdots D_M})}e\bra{		\frac{D_{M}x_{M} \overline{x_{M-1}}}{\frac{Q_1\cdots Q_{M-1} c}{D_1 \cdots D_{M-1} }}+\frac{b_M y_M x_M}{\cc} }, $$
to which we apply Lemma \ref{lemma:cancel} with $C=\frac{Q_1\cdots Q_{M-1}c}{D_1\cdots D_{M-1}}$, $Q=Q_M$, $a=\overline{x_{M-1}}$, $b=b_M$ and $y=y_M$.
This forces $b_M=Q_M$ and $y_M\equiv -\overline{x_{M-1}}\; (\mod \frac{Q_1\cdots Q_{M-1}c}{D_1\cdots D_{M-1}})$, for  otherwise, the quantity is zero.  Note the consistency that the modulus of $x_M$,
 $${\frac{Q_1\cdots Q_{M-1} c}{D_1 \cdots D_{M-1} }} \ \ = \ \ \frac{D_M}{b_M}{\frac{Q_1\cdots Q_{M} c}{D_1 \cdots D_{M} }}
$$
is precisely the modulus  for $y_M$.  Using this, we continue to apply
  Lemma \ref{lemma:cancel} to $D_{M-1}$- and $x_{M-1}$-summations,
$$\sum_{D_{M-1}| \frac{Q_1\ldots Q_{M-1} c}{D_1 \ldots D_{M-2}}}\ \sums_{x_{M-1}(\mod \frac{Q_1\cdots Q_{M-1} c}{D_1 \cdots D_{M-1}})}e\bra{		\frac{D_{M-1}x_{M-1} \overline{x_{M-2}}}{\frac{Q_1\cdots Q_{M-2} c}{D_1 \cdots D_{M-2} }}+\frac{-b_{M-1} y_{M-1} x_{M-1}}{\tfrac{Q_1\cdots Q_{M-1} c}{D_1 \cdots D_{M-1} }} } .$$
This forces $b_{M-1}=Q_{M-1}$ and $y_{M-1}\equiv (-1)^2\overline{x_{M-2}}\; (\mod \frac{Q_1\cdots Q_{M-2}c}{D_1\cdots D_{M-2}})$. In turn, we consecutively apply  Lemma \ref{lemma:cancel} to $D_j$- and $x_j$-summations for $j=M-2,\ldots,2$,
$$\sum_{D_{j}| \frac{Q_1\ldots Q_{j} c}{D_1 \ldots D_{j-1}}}\ \sums_{x_{j}(\mod \frac{Q_1\cdots Q_{j} c}{D_1 \cdots D_{j}})}e\bra{		\frac{D_{j}x_{j} \overline{x_{j-1}}}{\frac{Q_1\cdots Q_{j-1} c}{D_1 \cdots D_{j-1} }}+\frac{(-1)^{M-j}b_{j} y_{j} x_{j}}{\tfrac{Q_1\cdots Q_{j} c}{D_1 \cdots D_{j} }} },$$
forcing $b_{j}=Q_{j}$ and $y_{j}\equiv (-1)^{M-j+1}\overline{x_{j-1}}\; (\mod \frac{Q_1\cdots Q_{j-1}c}{D_1\cdots D_{j-1}}).$
 At the final stage, we apply Lemma \ref{lemma:cancel} once more to the remaining sum
$$\sum_{D_1|Q_1c}\ \sums_{x_1(\mod \frac{Q_1c}{D_1})} e\bra{\frac{D_1x_1\bar a}{c}+\frac{(-1)^{M-1}b_1y_1x_1}{\tfrac{Q_1c}{D_1}}}
$$
to force $b_1=Q_1$ and  $y_1\equiv (-1)^M \bar a$ $(\mod c)$ and complete the proof of the Lemma.
\end{proof}
}

\begin{proof}[Proof of Theorem \ref{thm:balancedvoronoi}]
We   open up the hyper-Kloosterman sums on the lefthand side  of \eqref{eq:bal_voronoi_left_in_original_theorem}
completely, which results in the formal identity
\begin{align}
&\sumD \sum_{n=1}^\infty \frac{A(q_{L},\cdots, q_1,D_1,\cdots, D_M ,n)\Kl_M(\bar{a},n, c; \vec{Q}, \vec{D})}{n^s}\frac{  q_1^{Ls} \cdots	q_L^{s}}
{ D_1^{(M+1)s-M} \cdots D_M^{2s-1}}
\label{eq:balanced_voronoi_left}\\
= \ \ &\sumD \sum_{n=1}^\infty \frac{A(q_{L},\cdots, q_1,D_1,\cdots, D_M ,n)
}{n^s}
\frac{  q_1^{Ls} \cdots	q_L^{s}}
{ D_1^{(M+1)s-M} \cdots D_M^{2s-1}}\nonumber\\
& \! \! \sums_{x_1(\mod \frac{Q_1c}{D_1})}\
\sums_{x_2(\mod \frac{Q_1 Q_2 c}{D_1 D_2})}\!
\cdots\!
\sums_{x_M(\mod \frac{Q_1\cdots Q_M c}{D_1 \cdots D_M})}\!\!\!\!
e\bra{\frac{D_1x_1\bar{a}}{c} + \frac{D_2x_2 \overline{x_1}}{\frac{Q_1c}{D_1}}
		+ \cdots +
		\frac{D_{M}x_{M} \overline{x_{M-1}}}{\frac{Q_1\cdots Q_{M-1} c}{D_1 \cdots D_{M-1} }}
		+ \frac{n \overline{x_{M}}}{\frac{Q_1\cdots Q_{M} c}{D_1 \cdots D_{M} }}}\nonumber\\
= \ \ &
\sumD
\frac{1}{ D_1^{Ns-M} \cdots D_M^{Ns-1}}
\sums_{x_1(\mod \frac{Q_1c}{D_1})}
\!\cdots\!\!
\sums_{x_M(\mod \frac{Q_1\cdots Q_M c}{D_1 \cdots D_M})}
e\bra{\frac{D_1x_1\bar{a}}{c} + \frac{D_2x_2 \overline{x_1}}{\frac{Q_1c}{D_1}}
		+ \cdots +
		\frac{D_{M}x_{M} \overline{x_{M-1}}}{\frac{Q_1\cdots Q_{M-1} c}{D_1 \cdots D_{M-1} }}}
\nonumber\\
&\quad \;\quad \;\quad \;\quad \;\quad   \times \ \sum_{n=1}^\infty \frac{A(q_{L},\cdots, q_1,D_1,\cdots, D_M ,n)
}{n^s}
{{ D_1^{(L+1)s}\cdots  D_M^{(L+M)s} }     q_1^{Ls} \cdots	q_L^{s} }
		 e\bra{\frac{n \overline{x_{M}}}{\frac{Q_1\cdots Q_{M} c}{D_1 \cdots D_{M} }}}.\nonumber
\end{align}
By Theorem \ref{thm:originalvoronoi}, the $n$-sum part is absolutely convergent for $\Re s\gg 1$ and has analytic continuation to $\mathbb{C}$, hence the same assertions are true of \eqref{eq:bal_voronoi_left_in_original_theorem}.
Applying (\ref{eq:glnVoronoi}) to the $n$-sum, we get
\begin{align*}
 &
\sumD
 D_1^{M-1} D_2^{M-2}\cdots D_{M-1}
\sums_{x_1(\mod \frac{Q_1c}{D_1})}
\!\cdots\!\!
\sums_{x_M(\mod \frac{Q_1\cdots Q_M c}{D_1 \cdots D_M})}
e\bra{\frac{D_1x_1\bar{a}}{c} + \frac{D_2x_2 \overline{x_1}}{\frac{Q_1c}{D_1}}
		+ \cdots +
		\frac{D_{M}x_{M} \overline{x_{M-1}}}{\frac{Q_1\cdots Q_{M-1} c}{D_1 \cdots D_{M-1} }}}
		\\
&\sum_{b_M|D_M\cc}\cdots\sum_{b_1|\frac{D_M\cdots D_1}{b_{M}\cdots b_2}\cc}\sum_{d_1|q_1\cb}\cdots \sum_{d_L|\frac{q_1\cdots q_L}{d_1\cdots d_{L-1}}\cb}\\
&\quad\;  \ \ \sum_{n=1}^\infty\frac{A(n, d_L,\cdots,d_1,b_1,\cdots,b_M)}{n^{1-s}
\bra{{Q_1\cdots Q_{M} c}}^{Ns-1}b_M^{1-(L+M+1)s}\cdots b_1^{1-(L+2)s} d_1^{1-(L+1)s}\cdots d_L^{1-2s}
		 }\\
		 &
\quad\;\quad\; \times\!\left[\tfrac{\gama_+(s)-\gama_-(s)}{2}\Kl_{N-2}\bra{ x_M, n,\cc;(D_M,\cdots,D_1,q_1,\cdots,q_L), (b_M,\cdots,b_1,d_1,\cdots,d_L)}
\right.
\\
&\quad\;\quad \ \   \left. +\tfrac{\gama_+(s)+\gama_-(s)}{2}
\Kl_{N-2}\bra{ x_M,- n,\cc;(D_M,\cdots,D_1,q_1,\cdots,q_L), (b_M,\cdots,b_1,d_1,\cdots,d_L)}
\right]\!,
\end{align*}		
which is absolutely convergent for $\Re s\ll -1$.
We open up the hyper-Kloosterman sum partially, obtaining
\begin{align*}
	 &\Kl_{N-2}\bra{ x_M, n,\cc;(D_M,\cdots,D_1,q_1,\cdots,q_L), (b_M,\cdots,b_1,d_1,\cdots,d_L)}\\
	=&\sums_{y_M(\mod \tfrac{D_M}{b_M}\cc)} \ \sums_{y_{M-1}(\mod \tfrac{D_M D_{M-1}}{b_M b_{M-1}}\cc)}  \cdots\sums_{y_1(\mod \tfrac{D_M\cdots D_1}{b_M\cdots b_1}\cc)}
		\\
&
\quad \;\quad \;
e\bra{\frac{b_M y_M x_M}{\cc} + \frac{b_{M-1} y_{M-1} \overline{y_M}}{\frac{D_M}{b_M}\cc}+ \cdots +  \frac{b_1y_1\bar{y}_2}{\tfrac{D_M\cdots D_2}{b_M\cdots b_2}\cc} }
\Kl_L\bra{\bar{y}_1,n,\cb;(q_1,\cdots,q_L),(d_1,\cdots,d_L)}.
\end{align*}
After reordering the summations, $L_{\vec{q},\vec{Q}}(s,F,  \bar a/c) $ equals \addtocounter{equation}{1}
\begin{align*}
 & \nonumber
\sum_{D_1|Q_1c}
\ \sums_{x_1(\mod \frac{Q_1c}{D_1})}
\cdots  \sum_{D_{M-1}| \frac{Q_1\ldots Q_{M-1} c}{D_1 \ldots D_{M-2}}}
	\	\sums_{x_{ M-1}(\mod \frac{Q_1\cdots Q_{M-1} c}{D_1 \cdots D_{M-1}})}
\sum_{D_{M}| \frac{Q_1\ldots Q_M c}{D_1 \ldots D_{M-1}}}
	\	\sums_{x_M(\mod \frac{Q_1\cdots Q_M c}{D_1 \cdots D_M})}
	\label{eq:sum_of_D}\tag{\theequation a}\\
	&\resizebox{0.922\hsize}{!}{$
	\sideset{}{}\sum\limits_{b_M|\frac{Q_1\cdots Q_Mc}{D_1\cdots D_{M-1}}}
\			\sums\limits_{y_M(\mod \frac{Q_1\cdots Q_Mc		}{D_1\cdots D_{M-1}b_M})}
\sideset{}{}\sum\limits_{b_{M-1}|\frac{Q_1\cdots Q_Mc}{D_1\cdots D_{M-2}b_M}}
\			\sums\limits_{y_{M-1}(\mod \frac{Q_1\cdots Q_Mc		}{D_1\cdots D_{M-2}b_{M-1}b_M})}
\cdots
\sideset{}{}\sum\limits_{b_1|\frac{Q_1\cdots Q_M c}{b_2\cdots b_M}}
\ \sums\limits_{y_1(\mod \frac{Q_1\cdots Q_M c	}{b_1\cdots b_M})}
$}
\label{eq:sum_of_b}\tag{\theequation b}\\
%
%
&\quad\;
 \sum_{d_1|q_1\cb}\cdots \sum_{d_L|\frac{q_1\cdots q_L}{d_1\cdots d_{L-1}}\cb} D_1^{M-1} D_2^{M-2}\cdots D_{M-1}\label{eq:sum_of_d}\tag{\theequation c}
\\
&\quad\;\quad\quad\;e\bra{\tfrac{D_1x_1\bar{a}}{c} + \tfrac{D_2x_2 \overline{x_1}}{\tfrac{Q_1c}{D_1}}
		+ \cdots +
		\tfrac{D_{M}x_{M} \overline{x_{M-1}}}{\tfrac{Q_1\cdots Q_{M-1} c}{D_1 \cdots D_{M-1} }}\;+\;\tfrac{b_M y_M x_M}{\cc} + \tfrac{b_{M-1} y_{M-1} \overline{y_M}}{\tfrac{D_M}{b_M}\cc}+ \cdots +  \tfrac{b_1y_1\bar{y}_2}{\tfrac{D_M\cdots D_2}{b_M\cdots b_2}\cc} }	\nonumber\\
&\quad\;\quad\;\quad\;\quad
\sum_{n=1}^\infty\frac{A(n, d_L,\cdots,d_1,b_1,\cdots,b_M)}{n^{1-s}
\bra{{Q_1\cdots Q_{M} c}}^{Ns-1}b_M^{1-(L+M+1)s}\cdots b_1^{1-(L+2)s} d_1^{1-(L+1)s}\cdots d_L^{1-2s}
		 }	\nonumber
		\\
		 &
\quad\;\quad\;\quad\;\quad\;\quad\;\times\left[\tfrac{\gama_+(s)-\gama_-(s)}{2}
\Kl_L\bra{\bar{y}_1,n,\cb;(q_1,\cdots,q_L),(d_1,\cdots,d_L)}
\right.	\nonumber
\\
&\quad\;\quad\;\quad\;\quad\;\quad\;\quad \left. +\tfrac{\gama_+(s)+\gama_-(s)}{2}
\Kl_L\bra{\bar{y}_1,-n,\cb;(q_1,\cdots,q_L),(d_1,\cdots,d_L)}
\right].	\nonumber
\end{align*}
Observe that $D_M$ is not present in the summations in lines \eqref{eq:sum_of_b} and \eqref{eq:sum_of_d}.
 Thus consider the $D_M$- and $x_M$-summations,
$$\sum_{D_{M}| \frac{Q_1\ldots Q_M c}{D_1 \ldots D_{M-1}}}\ \sums_{x_M(\mod \frac{Q_1\cdots Q_M c}{D_1 \cdots D_M})}e\bra{		\frac{D_{M}x_{M} \overline{x_{M-1}}}{\frac{Q_1\cdots Q_{M-1} c}{D_1 \cdots D_{M-1} }}+\frac{b_M y_M x_M}{\cc} }, $$
to which we apply Lemma \ref{lemma:cancel} with $C=\frac{Q_1\cdots Q_{M-1}c}{D_1\cdots D_{M-1}}$, $Q=Q_M$, $a=\overline{x_{M-1}}$, $b=b_M$, $x=x_M$ and $y=y_M$.
This forces $b_M=Q_M$ and $y_M\equiv -\overline{x_{M-1}}\; (\mod \frac{Q_1\cdots Q_{M-1}c}{D_1\cdots D_{M-1}})$, for  otherwise, the quantity is zero.  Note that indeed the modulus of $x_{M-1}$,
 $${\frac{Q_1\cdots Q_{M-1} c}{D_1 \cdots D_{M-1} }} \ \ = \ \ {\frac{Q_1\cdots Q_{M} c}{D_1 \cdots D_{M-1}b_M }}\,,
$$
is precisely the modulus  of $y_M$.  The overall expression is multiplied by $QC=\frac{Q_1\cdots Q_Mc}{D_1\cdots D_{M-1}}$, so the portion of the summand in  (\ref{eq:sum_of_d}) becomes $D_1^{M-2}D_2^{M-3}\cdots D_{M-2}$.

Now that $b_M$ and $y_{M}$ have been removed, we see that the remaining indices of summation  in \eqref{eq:sum_of_b} and \eqref{eq:sum_of_d} do not involve $D_{M-1}$.   We continue to apply
  Lemma \ref{lemma:cancel} to $D_{M-1}$- and $x_{M-1}$-summations,
$$\sum_{D_{M-1}| \frac{Q_1\ldots Q_{M-1} c}{D_1 \ldots D_{M-2}}}\ \sums_{x_{M-1}(\mod \frac{Q_1\cdots Q_{M-1} c}{D_1 \cdots D_{M-1}})}e\bra{		\frac{D_{M-1}x_{M-1} \overline{x_{M-2}}}{\frac{Q_1\cdots Q_{M-2} c}{D_1 \cdots D_{M-2} }}+\frac{-b_{M-1} y_{M-1} x_{M-1}}{\tfrac{Q_1\cdots Q_{M-1} c}{D_1 \cdots D_{M-1} }} } .$$
This forces $b_{M-1}=Q_{M-1}$ and $y_{M-1}\equiv (-1)^2\overline{x_{M-2}}\; (\mod \frac{Q_1\cdots Q_{M-2}c}{D_1\cdots D_{M-2}})$. In turn, we consecutively apply  Lemma \ref{lemma:cancel} to $D_j$- and $x_j$-summations for $j=M-2,\ldots,2$,
$$\sum_{D_{j}| \frac{Q_1\ldots Q_{j} c}{D_1 \ldots D_{j-1}}}\ \sums_{x_{j}(\mod \frac{Q_1\cdots Q_{j} c}{D_1 \cdots D_{j}})}e\bra{		\frac{D_{j}x_{j} \overline{x_{j-1}}}{\frac{Q_1\cdots Q_{j-1} c}{D_1 \cdots D_{j-1} }}+\frac{(-1)^{M-j}b_{j} y_{j} x_{j}}{\tfrac{Q_1\cdots Q_{j} c}{D_1 \cdots D_{j} }} },$$
forcing $b_{j}=Q_{j}$ and $y_{j}\equiv (-1)^{M-j+1}\overline{x_{j-1}}\; (\mod \frac{Q_1\cdots Q_{j-1}c}{D_1\cdots D_{j-1}}).$
 At the final stage, we apply Lemma \ref{lemma:cancel} once more to the remaining sum
$$\sum_{D_1|Q_1c}\ \sums_{x_1(\mod \frac{Q_1c}{D_1})} e\bra{\frac{D_1x_1\bar a}{c}+\frac{(-1)^{M-1}b_1y_1x_1}{\tfrac{Q_1c}{D_1}}}
$$
to force $b_1=Q_1$ and  $y_1\equiv (-1)^M \bar a$ $(\mod c)$.
At this point the summations in \eqref{eq:sum_of_D} and \eqref{eq:sum_of_b} have all disappeared, and we find
(\ref{eq:balanced_voronoi_left}) is equal to
\begin{align*}
\sum_{d_1|q_1c}\cdots  \sum_{d_L|\frac{q_1\cdots q_L}{d_1\cdots d_{L-1}}c}\sum_{n=1}^\infty &\frac{A(n, d_L,\cdots,d_1,Q_1,\cdots,Q_M)\,c^MQ_1^M\cdots  Q_M}{n^{1-s}
\bra{{Q_1\cdots Q_{M} c}}^{Ns-1}Q_M^{1-(L+M+1)s}\cdots Q_1^{1-(L+2)s} d_1^{1-(L+1)s}\cdots d_L^{1-2s}
		 }
\\
&\times \left[\tfrac{\gama_+(s)-\gama_-(s)}{2}\Kl_L\bra{ (-1)^Ma, n,c;(q_1,\cdots,q_L), (d_1,\cdots,d_L)}
\right.
\\
&\quad \left. +\tfrac{\gama_+(s)+\gama_-(s)}{2}
\Kl_L\bra{  (-1)^{M}a, -n,c;(q_1,\cdots,q_L), (d_1,\cdots,d_L)}
\right]\\
 = \ \
 \tfrac{\gama_+(s) + (-1)^{M+1} \gama_-(s)}{2} & 
 \sumdd \sum_{n=1}^\infty
		\frac{A(n,d_{L}, \ldots,  d_1, Q_1,\cdots, Q_M) \Kl_L(a,n, c; \vec{q}, \vec{d}) }
		{n^{1-s} c^{Ns-1-M}}
\frac{Q_1^{M(1-s)}\cdots Q_M^{1-s}}{d_1^{1-(L+1)s}\cdots d_L^{1-2s}}
		\\
		+ \ \tfrac{\gama_+(s) + (-1)^M \gama_-(s)}{2} & \sumdd
\sum_{n=1}^\infty
		\frac{A(n,d_{L}, \ldots,  d_1, Q_1,\cdots, Q_M) \Kl_L(a,-n, c; \vec{q}, \vec{d}) }
		{n^{1-s} c^{Ns-1-M}}
\frac{Q_1^{M(1-s)}\cdots Q_M^{1-s}}{d_1^{1-(L+1)s}\cdots d_L^{1-2s}},
\end{align*}		
which is  equivalent  to (\ref{eq:blaVoronoi}).
		\end{proof}
%
%
%
%
%
%
%
%
%

\section*{Acknowledgment}
The authors would like to thank the two anonymous referees for their very helpful comments and suggestions.

\nothing{
\begin{theorem}[Li and Miller's balanced Voronoi formula on GL(4)]
\label{thm:limiller}
Let $F$ be a cuspidal automorphic form for $\mathrm{GL}(4,\mathbb{Z})$
with abelian Fourier coefficients $A(\;,\;,\;)$.
Let $\omega\in C_c^\infty(0,\infty) $  be a test function and \blue{$\Omega_+$} its \blue{transform defined in Appendix \ref{app:transforms}.} \cyan{\textit{Voronoi transform}\emph{ seems to be a non-conventional name. Should it be Hankel transform or something like that?}}
Then

\begin{align*}
&\sum_{m=-\infty }^{\infty }\sum_{d|N} d A(a,d,m)S\left( m,a;
\frac{N}{d}\right) \omega \bra{\frac{md^{2}}{(N,b)}}
\\ &=  \sum_{m=-\infty }^{\infty }
\sum_{d\left|\tfrac{aN}{(N,b)}\right.} \frac{d(N,b)^2}{N^2}A(m,d,(N,b))S\left( m,(N,b);\frac{aN}{(N,b)d}\right)  \blue{\Omega_+}\bra{\frac{nd^2(N,b)^4}{aN^4}}
\,.
\end{align*}

\end{theorem}
}

\;

\noindent {\scshape{Stephen D. Miller}} \\
Department of Mathematics\\
The State University of New Jersey\\
Piscataway, NJ 08854, USA\\
 miller@math.rutgers.edu
\\

\noindent {\scshape{Fan Zhou}} \\
Department of Mathematics and Statistics\\
The University of Maine\\
Orono, ME 04469, USA\\
fan.zhou@maine.edu

\end{document}